\newtheorem{theorem}{Theorem}[section]
\newtheorem{corollary}{Corollary}[theorem]
\newtheorem{lemma}[theorem]{Lemma}
\newcommand{\mc}{\mathcal}
\newcommand{\C}{\mathbb{C}}
\renewcommand{\Re}{\mathbb{R}}
\newcommand{\norm}[1]{\left\lVert#1\right\rVert}
\newcommand{\relambda}{\text{Re}\left(\lambda\right)}
\title{\LARGE \bf
Learning Bounded Koopman Observables: Results on Stability, Continuity, and Controllability
}
\author{Craig Bakker, Thiagarajan Ramachandran, and W. Steven Rosenthal
\thanks{C. Bakker, T. Ramachandran, and W.S. Rosenthal are with the Pacific Northwest National Laboratory,
        Richland, Washington
        {\tt\small craig.bakker@pnnl.gov}}%
}
\begin{document}

\maketitle
\thispagestyle{empty}
\pagestyle{empty}

\begin{abstract} 
The Koopman operator is an useful analytical tool for studying dynamical systems -- both controlled and uncontrolled.  For example, Koopman eigenfunctions can provide non-local stability information about the underlying dynamical system.  Koopman representations of nonlinear systems are commonly calculated using machine learning methods, which seek to represent the Koopman eigenfunctions as a linear combinations of nonlinear state measurements.  As such, it is important to understand whether, in principle, these eigenfunctions can be successfully obtained using machine learning and what eigenfunctions calculated in this way can tell us about the underlying system.  To that end, this paper presents an analysis of continuity, stability and control limitations associated with Koopman eigenfunctions under minimal assumptions and provides a discussion that relates these properties to the ability to calculate Koopman representations with machine learning.
\end{abstract}

\section{INTRODUCTION}

\subsection{THE KOOPMAN OPERATOR}

The Koopman Operator (KO) provides a way to transform a (potentially) nonlinear finite-dimensional dynamical system into an infinite-dimensional linear system.  It does this by lifting the nonlinear state dynamics into a functional space of observables, where the dynamics are linear \cite{budisic12jsr}.  Analytical Koopman representations of nonlinear systems are rare, which has motivated the use of data-driven methods -- in particular, the use of time series data to calculate finite truncations of the Koopman operator and its associated observables.  A common set of approaches for doing this is based on Dynamic Mode Decomposition (DMD) \cite{tu14jsr}.  DMD works by defining Koopman eigenfunctions as linear combinations of state variable measurements.  Extended DMD (EDMD) operates in a similar fashion but uses nonlinear functions of the state measurements \cite{williams15jsr}.  

The ability to represent Koopman observables and eigenfunctions that are nonlinear functions of the state can result in greater accuracy, but it also requires choosing a good dictionary of functions from which to work \cite{williams15jsr,kutz16jsr}.  Common choices include sets of polynomials \cite{williams15jsr,proctor18jsr} and radial basis functions \cite{williams16cp,korda18jsr}.  Dictionary-based approaches also suffer from combinatorial explosion as the dimension of the state increases.  An alternative to this uses neural networks to learn both the observables and the Koopman operator simultaneously \cite{yeung17jsr,li17jsr}.

Consider the autonomous $d$-dimensional dynamical system 

\begin{gather} \label{sys}
\dot{x} = f\left(x\right), \ x \in X \subseteq \Re^d
\end{gather}

where $f:\Re^d \rightarrow \Re^d$ is sufficiently smooth to guarantee the existence and uniqueness of solutions. We denote the flow induced by the system by $F^t(x)$ (i.e., $x(t) = F^t(x_0)$ is the solution to (\ref{sys}) at time $t$ starting from the initial condition $x_0 \in \Re^d$ at time 0).  Let $X \subset \Re^d$ be a compact set which is forward invariant under $F^t(\cdot)$.  The Koopman operator $\mc{K}^t$ describes the evolution of observables $g: X \rightarrow \C$ along the trajectories of (\ref{sys}):

\begin{gather} \label{def:Koop}
\mc{K}^t g = g \circ F^t. 
\end{gather}

The system (\ref{sys}) may be nonlinear, but the Koopman operator (\ref{def:Koop}) is always linear, so it can be characterized by its eigenvalues and eigenfunctions. A function $\phi: X \rightarrow \C$ is said to be an eigenfunction of $\mc{K}^t$ with eigenvalue $\lambda \in \C$ if 

\begin{gather} \label{Koop:eig}
(\mc{K}^t \phi)(\cdot) = e^{\lambda t}\phi(\cdot).
\end{gather}

Furthermore, for the Koopman semigroup $\{\mc{K}^t\}_{t \geq 0}$, we define $D(\mc{L})$ to be the set of all $g(x)$ such that the limit

\begin{align}
\mc{L} g = \lim_{t \rightarrow 0} \frac{\mc{K}^tg(x) -g(x)}{t}
\end{align}

exists in the sense of strong convergence. That is,

\begin{align}
\lim_{t \rightarrow 0} \norm{\mc{L}g - \frac{\mc{K}^t g-g}{t}}_{L^\infty} = 0.
\end{align}

The operator $\mc{L}$ is called the infinitesimal generator of the Koopman semigroup. The infinitesimal generator satisfies the eigenvalue equation

\begin{align}
\mc{L}\phi = \lambda \phi.
\end{align}

Additionally, if $g$ is continuously differentiable, we obtain

\begin{align}\label{chain_rule}
\mc{L}g = f \cdot \nabla g,
\end{align}

\noindent where $\nabla$ denotes the gradient.

\subsection{KOOPMAN OPERATOR THEORETICAL RESULTS}

Several papers address the theoretical considerations underlying data-driven methods for calculating KO representations.  Tu et al. \cite{tu14jsr} introduce the concept of linear consistency and show how it related to DMD's ability to calculate KO eigenvalue/eigenfunction pairs.  Arbabi and Mezi{\'c} \cite{arbabi17jsr} prove that DMD will converge to KO eigenvalues and eigenfunctions, for ergodic systems, as the time window of observations becomes infinitely long.  Budi{\v{s}}i{\'c} et al. also provide a convergence proof for KO mode calculation with generalized Laplace analysis and discuss the nature of generalized eigenfunctions for repeated KO eigenvalues \cite{budisic12jsr}.

If it is possible to calculate accurate KO representations numerically, it is then interesting and useful to study the connections between properties of the underlying dynamical systems and the KO representation.  We can first consider how known properties of the underlying dynamical system imply certain characteristics of the Koopman system.  For example, using notions of conjugacy developed in \cite{budisic12jsr}, Mezi{\'c} \cite{mezic15cp} claims that, if a nonlinear dynamical system is globally conjugate to a linear system, the KO spectrum can be determined from the spectrum of the dynamical system's Jacobian at a critical point.  Mauroy et al. \cite{mauroy13jsr} and Mauroy and Mezi{\'c} \cite{mauroy16jsr} provide some additional details and proofs; a consistent requirement therein is that the eigenvalues be distinct.

Tu et al. \cite{tu14jsr} also claim, without proof, that in dynamical systems with multiple basins of attraction, KO eigenfunctions are typically only supported on one basin of attraction.  The authors refer to the Duffing oscillator results in Williams et al. \cite{williams15jsr}, who in turn refers to Mauroy et al. \cite{mauroy13jsr}, though it is not clear which part of that paper Williams et al. are referring to.

We can also consider what known properties of the KO representation imply about the underlying dynamical system.  A key result here relates to eigenfunctions that are constant along trajectories; these eigenfunctions have $\lambda=0$ in continuous-time and $\lambda=1$ in discrete-time systems.  The level sets of these functions partition the phase space into invariant sets \cite{mezic15cp,mezic05jsr}.  Known eigenfunctions can also be combined to form new ones.  If $\phi_1,\lambda_1$ and $\phi_2,\lambda_2$ are two pairs of eigenfunctions and eigenvalues, then in discrete-time systems, $\phi_1^r \phi_2^s, \lambda^r_1 \lambda^s_2$ is also an eigenpair \cite{budisic12jsr} and $\phi_1^r \phi_2^s, r \lambda_1 + s \lambda_2$ is an eigenpair in continuous-time systems \cite{mauroy16jsr}.  This can allow us to construct these partitioning eigenfunctions.

Mauroy et al. \cite{mauroy13jsr} also define the concept of isostables with reference to KO eigenfunctions.  These isostables constitute level sets of a special kind of Lyapunov function -- one which has a constant decay rate.  This shows a way in which the KO can be used to obtain non-local stability information.  Similarly, Mezi{\'c} \cite{mezic15cp} shows that if a nonlinear dynamical system is globally conjugate to a linear system, the KO eigenfunctions of that system can be used to identify stable, unstable, and center manifolds.

The most detailed theory paper of this kind is Mauroy and Mezi{\'c} \cite{mauroy16jsr}.  They focus on global stability for hyperbolic attractors (both fixed points and limit cycles), and a foundational assumption of the paper is that the observables (and therefore the eigenfunctions) under consideration are continuous.  The key results of the paper are that trajectories converge to sets where eigenfunctions with negative eigenvalues (or complex eigenvalues with negative real part) are 0 and that, under certain conditions, it is possible to prove global stability using the eigenfunctions corresponding to the eigenvalues of the original system's Jacobian at the fixed point.  There are analogous results for limit cycles.  The authors then compute KO eigenfunctions with polynomial bases and use these to estimate basins of attraction.

These results are important, but they come with some potentially restrictive assumptions.  In particular, the assumption of eigenfunction continuity may not hold for multi-modal nonlinear systems. The main contribution of this paper is to provide a collection of results that establish continuity, stability and control limits of Koopman representations under minimal assumptions (boundedness, primarily, as opposed to continuity).  Section \ref{sec:proofs} is a collection of theorems pertaining to stability, continuity and control. Section \ref{sec:discussion} then provides a brief discussion that connects the continuity and controllability results provided in Section \ref{sec:proofs} to the ability of machine learning methods to produce Koopman representations.

\section{ANALYTICAL RESULTS}
\label{sec:proofs}

\subsection{STABILITY AND CRITICAL POINTS}
\label{sec:stability}

\begin{lemma}
\label{fp zero lemma}
If a Koopman system $\dot{\psi} = \mathcal{L} \psi\left(x\right)$ for a dynamical system $\dot{x} = f\left(x\right)$ has an eigenvalue $\lambda \neq 0$ with associated eigenfunction $\phi\left(x\right)$, then

\begin{gather}
\left\{ x: f\left(x\right) = 0\right\} \subseteq \left\{ x: \phi\left(x\right) = 0\right\}
\end{gather}
\end{lemma}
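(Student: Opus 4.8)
The plan is to avoid the differential characterization $\mc{L}\phi = f \cdot \nabla \phi$ (which would require $\phi$ to be continuously differentiable) and instead work directly with the integrated eigenfunction relation inherited from the definition of the Koopman operator. Combining (\ref{def:Koop}) and (\ref{Koop:eig}), any eigenfunction satisfies $\phi\bigl(F^t(x)\bigr) = e^{\lambda t}\phi(x)$ for every $x \in X$ and every $t \geq 0$. The single structural fact I would exploit is that a critical point of $f$ is a fixed point of the flow: the trajectory initialized there never moves.

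First I would fix an arbitrary $x^* \in \left\{x : f(x) = 0\right\}$. Since $f(x^*) = 0$, the constant curve $t \mapsto x^*$ solves (\ref{sys}) with initial condition $x^*$, and by uniqueness of solutions it is \emph{the} trajectory, so $F^t(x^*) = x^*$ for all $t \geq 0$. Substituting $x = x^*$ into the eigenfunction relation then yields
\begin{gather}
\phi(x^*) = \phi\bigl(F^t(x^*)\bigr) = e^{\lambda t}\phi(x^*),
\end{gather}
which rearranges to $\bigl(1 - e^{\lambda t}\bigr)\phi(x^*) = 0$ for every $t \geq 0$.

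The one point requiring care is passing from this identity to $\phi(x^*) = 0$, which only needs a single time $t$ at which $e^{\lambda t} \neq 1$. If $\relambda \neq 0$, then $\lvert e^{\lambda t} \rvert = e^{t\relambda} \neq 1$ for every $t > 0$; if instead $\relambda = 0$ while $\lambda \neq 0$, then $\lambda$ is a nonzero purely imaginary number and $e^{\lambda t} = 1$ holds only on a discrete (hence measure-zero) subset of $[0,\infty)$. In either case, because $\lambda \neq 0$, some admissible $t$ satisfies $e^{\lambda t} \neq 1$, and fixing it forces $\phi(x^*) = 0$. Since $x^*$ was arbitrary, this gives the claimed inclusion. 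As a consistency check, note that under the extra hypothesis that $\phi$ is continuously differentiable, (\ref{chain_rule}) delivers the same conclusion in one line, $\lambda \phi(x^*) = f(x^*) \cdot \nabla\phi(x^*) = 0$; the flow-based argument is precisely what allows us to drop that smoothness requirement, in keeping with the paper's minimal-assumptions emphasis.
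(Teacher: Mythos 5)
Your proof is correct and follows essentially the same route as the paper's: evaluate the eigenfunction relation $\phi\left(F^t\left(x^{\star}\right)\right) = e^{\lambda t}\phi\left(x^{\star}\right)$ at a fixed point of the flow and conclude $\phi\left(x^{\star}\right) = 0$ from $\lambda \neq 0$. Your treatment is slightly more careful than the paper's at the final step, since you explicitly handle the purely imaginary case where $e^{\lambda t} = 1$ can hold for isolated values of $t$ -- a detail the paper's one-line conclusion glosses over -- but this is a refinement of the same argument, not a different one.
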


\begin{proof}

For any $x$, 

\begin{gather}
\phi\left(F^t\left(x\right)\right) = e^{\lambda t} \phi \left(x\right)
\end{gather}

If $x^{\star}$ is a stationary point, then

\begin{gather}
F^t\left(x^{\star}\right) = x^{\star} \ \forall \ t
\end{gather}

\noindent where $F^t\left(x\right)$ is the flow mapping under $\dot{x} = f\left(x\right)$.  For this to hold, we must have

\begin{gather}
\phi\left(F^t\left(x^{\star}\right)\right) = e^{\lambda t} \phi\left(x^{\star}\right) \\
 \phi\left(x^{\star}\right) = e^{\lambda t} \phi\left(x^{\star}\right)
\end{gather}

Since $\lambda \neq 0$, $\phi\left(x^{\star}\right) = 0$.

\end{proof}

Note that this includes purely imaginary $\lambda$ and $\lambda$ such that $\relambda>0$, not just $\relambda < 0$.

\begin{corollary}
\label{cor:cor11}
If 

\begin{gather}
\lim_{x \rightarrow \infty} \phi \left(x\right) = 0
\end{gather}

\noindent then $\relambda < 0$ does not necessarily imply stability

\end{corollary}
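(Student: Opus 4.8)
The plan is to refute the implication by counterexample, since the statement asserts a non-implication: I would produce a system with an unstable fixed point that nonetheless carries an eigenfunction $\phi$ with $\relambda<0$ and $\lim_{x\to\infty}\phi(x)=0$. First I would isolate the reasoning that makes ``$\relambda<0\Rightarrow$ stability'' tempting. From the eigenfunction relation $\phi(F^t(x))=e^{\lambda t}\phi(x)$ one gets $|\phi(F^t(x))|=e^{\relambda t}|\phi(x)|\to 0$ along every trajectory whenever $\relambda<0$, and it is natural to read decay of $\phi$ \emph{along} a trajectory as convergence of the trajectory \emph{to} the zero set of $\phi$, hence to an attractor. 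The hypothesis $\lim_{x\to\infty}\phi=0$ is exactly what invalidates this reading: a trajectory escaping to infinity drives $\phi$ to zero just as effectively as one converging to a finite zero of $\phi$.

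Concretely, I would take the scalar system $\dot{x}=x$, whose origin is an unstable fixed point and whose flow is $F^t(x)=x e^{t}$. For any $k>0$ set $\phi(x)=x^{-k}$; then by (\ref{chain_rule}),
\begin{gather}
\mathcal{L}\phi = f\,\phi' = x\bigl(-k\,x^{-k-1}\bigr) = -k\,\phi,
\end{gather}
so $\phi$ is an eigenfunction with $\lambda=-k<0$ and $\lim_{x\to\infty}\phi(x)=0$. Along any trajectory, $\phi(F^t(x))=x^{-k}e^{-kt}\to 0$, consistent with the general relation, yet $F^t(x)=x e^{t}\to\infty$, so every trajectory diverges and the fixed point is unstable. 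This exhibits $\relambda<0$ together with $\lim_{x\to\infty}\phi=0$ in the absence of stability, which proves the corollary.

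The main obstacle is reconciling such an example with the paper's standing assumptions of a compact forward-invariant domain and bounded observables, which the divergent trajectories and the pole of $\phi$ at the fixed point both violate; I would either present the corollary explicitly as a cautionary statement outside those assumptions, or swap in a bounded, monotone eigenfunction (for instance $\phi(x)=e^{-x}$ under the constant drift $f=-\lambda>0$ on a forward-invariant half-line) that vanishes at infinity while the dynamics carry every point away. A related subtlety, and the point where this result touches the continuity questions treated elsewhere, is that in one dimension no continuous bounded eigenfunction can vanish both at a fixed point and at infinity while being nonzero in between, because $f\phi'=\lambda\phi$ rules out an interior extremum at which $\phi\neq 0$; this is precisely why the cleanest counterexample forces the fixed point to the boundary or to the singularity of $\phi$.
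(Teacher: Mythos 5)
Your proposal is correct, and its core insight is exactly the paper's: the paper ``proves'' Corollary \ref{cor:cor11} only by the informal remark following the statement -- namely that $\relambda < 0$ forces $\phi\left(x\left(t\right)\right) \rightarrow 0$ along trajectories, but when $\phi$ vanishes at infinity this decay can be realized by escape to infinity rather than by convergence to a critical point. Where you go beyond the paper is in making this a genuine proof: since the corollary asserts a non-implication, it strictly requires a witness, and the paper supplies none. Your example $\dot{x} = x$ with $\phi\left(x\right) = x^{-k}$ checks out ($\mc{L}\phi = x\left(-k x^{-k-1}\right) = -k\phi$, so $\lambda = -k < 0$ and $\phi \rightarrow 0$ at infinity, yet every nonzero trajectory diverges and the origin is unstable), as does the bounded alternative $\phi\left(x\right) = e^{-x}$ under constant positive drift. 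Your closing observations are also apt: the hypothesis $\lim_{x \rightarrow \infty} \phi\left(x\right) = 0$ only makes sense on an unbounded domain, so the corollary intrinsically sits outside the paper's standing assumption of a compact forward-invariant $X$ -- a tension the paper never acknowledges -- and your remark that a $C^1$ eigenfunction with $\lambda \neq 0$ cannot have an interior extremum at which it is nonzero (since $f \phi' = \lambda \phi$ there forces $\phi = 0$) correctly explains why any such counterexample must push the fixed point to a singularity or boundary of $\phi$'s domain. In short: same idea as the paper, but yours is the more complete argument.
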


The negative eigenvalue means that $\phi\left(x\left(t\right)\right) \rightarrow 0$ as $t \rightarrow \infty$, but that does not necessarily mean that $x\left(t\right)$ converges to a critical point.  In other words, a negative eigenvalue need not imply stability.

\begin{theorem}
If $\relambda > 0$ and $x^{\star}$ is a stable fixed point on some region $M$ such that

\begin{gather}
\lim_{t \rightarrow \infty} F^t\left(x_0\right) = x^{\star} \ \forall \ x_0 \in M
\end{gather}

\noindent and $\phi\left(x\right) > 0$ for $x \in M$, then either

\begin{gather}
\lim_{x \rightarrow x^{\star}} \left|\phi\left(x\right)\right| = \infty
\end{gather}

\noindent or $F^t\left(x\right)$ is only defined for $t \leq T$.

\end{theorem}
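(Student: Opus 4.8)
The plan is to exploit the eigenfunction relation $\phi\left(F^t\left(x\right)\right) = e^{\lambda t}\phi\left(x\right)$ along a single orbit that converges to $x^\star$, and to read off the behaviour of $\left|\phi\right|$ near $x^\star$ from the fact that $\relambda > 0$ forces the prefactor $e^{\relambda t}$ to grow without bound. First I would fix a point $x_0 \in M$ with $x_0 \neq x^\star$; by hypothesis $\phi\left(x_0\right) > 0$, so in particular $\left|\phi\left(x_0\right)\right|$ is a strictly positive constant. The dichotomy in the statement is then handled by asking whether the forward orbit through $x_0$ is complete: if $F^t\left(x_0\right)$ fails to be defined for all $t \geq 0$, we land immediately in the second alternative (the flow is only defined for $t \leq T$), and there is nothing more to prove.

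In the remaining case the orbit is defined for all $t \geq 0$, and I would take magnitudes in the eigenfunction relation to obtain
\begin{gather}
\left|\phi\left(F^t\left(x_0\right)\right)\right| = e^{\relambda t}\left|\phi\left(x_0\right)\right|.
\end{gather}
Because $\relambda > 0$ and $\left|\phi\left(x_0\right)\right| > 0$, the right-hand side diverges to $+\infty$ as $t \to \infty$. On the other hand, the defining property of $M$ gives $F^t\left(x_0\right) \to x^\star$. Hence the points $y_n = F^{t_n}\left(x_0\right)$, for any $t_n \to \infty$, form a sequence converging to $x^\star$ along which $\left|\phi\left(y_n\right)\right| \to \infty$; that is, $\phi$ is unbounded in every neighbourhood of $x^\star$, which is what the first alternative asserts. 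It is worth noting that this is entirely consistent with Lemma \ref{fp zero lemma}: since $\relambda > 0$ implies $\lambda \neq 0$, that lemma forces $\phi\left(x^\star\right) = 0$, so any finite value assigned at $x^\star$ must coexist with arbitrarily large values nearby --- a blow-up, not a continuous limit.

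The step I expect to be the main obstacle is interpreting and justifying the precise sense of $\lim_{x \to x^\star}\left|\phi\left(x\right)\right| = \infty$. The orbit argument cleanly produces a \emph{sequence} approaching $x^\star$ on which $\left|\phi\right|$ blows up, i.e. $\limsup_{x\to x^\star}\left|\phi\left(x\right)\right| = \infty$ together with unboundedness of $\phi$ near the fixed point; upgrading this to a genuine full limit over all approaches would require additional control, since the backward relation $\phi\left(x\right) = e^{-\lambda t}\phi\left(F^t\left(x\right)\right)$ is indeterminate in magnitude and does not by itself force large values of $\left|\phi\right|$ at every nearby point. I would therefore frame the conclusion as the incompatibility of the hypotheses with a \emph{bounded} eigenfunction --- the quantity of interest in this paper --- rather than leaning on pointwise continuity, and state the blow-up as holding along the attracting orbits that realise it.
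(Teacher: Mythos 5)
Your proof is correct and follows essentially the same route as the paper's: both exploit $\left|\phi\left(F^t\left(x\right)\right)\right| = e^{\relambda t}\left|\phi\left(x\right)\right| \rightarrow \infty$ along an orbit converging to $x^\star$, and resolve the dichotomy according to whether the forward orbit exists for all $t \geq 0$. Your closing caveat --- that the orbit argument yields unboundedness of $\left|\phi\right|$ near $x^\star$ (a $\limsup$ statement) rather than a genuine full limit --- is a fair observation, but it identifies a looseness present in the paper's own proof as well, not a gap peculiar to your argument.
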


\begin{proof}

Since $\phi\left(x\left(t\right)\right) = e^{\lambda t} \phi\left(x_0\right)$, if $x\left(t\right)$ defined as $t \rightarrow \infty$, then

\begin{gather}
\lim_{t \rightarrow \infty} \left|\phi\left(F^t\left(x\right)\right) \right|
\end{gather}

\noindent is defined.  Therefore

\begin{gather}
\lim_{t \rightarrow \infty} \left|\phi\left(F^t\left(x\right)\right)\right| = \lim_{t\rightarrow \infty} \left|e^{\lambda t} \phi\left(x\right)\right| = \infty
\end{gather}

In order for $\left| \phi\left(F^t\left(x\right)\right)\right|$ to remain bounded, for each $x$, $t$ must not be allowed to go to $\infty$, and thus there exists some $T$ for which $F^t\left(x\right)$, $t > T$, is not defined.

\end{proof}

Just as a negative eigenvalue does not imply stability, a positive eigenvalue does not imply instability if the eigenfunction is unbounded or if the trajectory reaches the critical point in finite time.

\begin{corollary}
If $F^T\left(x\right) = x^{\star}$, then

\begin{gather}
\phi\left(x\right) = e^{-\lambda T} \phi\left(x^{\star}\right)
\end{gather}

\end{corollary}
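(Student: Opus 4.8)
The plan is to derive the identity directly from the defining eigenfunction relation, exactly as in the proof of Lemma~\ref{fp zero lemma}. The only ingredient needed is that, for an eigenfunction $\phi$ with eigenvalue $\lambda$, the relation
\begin{gather}
\phi\left(F^t\left(x\right)\right) = e^{\lambda t}\phi\left(x\right)
\end{gather}
holds for every $x$ and every admissible $t$. First I would specialize this relation to $t = T$, the finite time at which the trajectory reaches the fixed point.

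Next, I would substitute the hypothesis $F^T\left(x\right) = x^{\star}$ into the left-hand side, obtaining
\begin{gather}
\phi\left(x^{\star}\right) = e^{\lambda T}\phi\left(x\right).
\end{gather}
Since $e^{\lambda T} \neq 0$, multiplying through by $e^{-\lambda T}$ yields the claimed identity $\phi\left(x\right) = e^{-\lambda T}\phi\left(x^{\star}\right)$. This is a one-step substitution, so no genuine obstacle arises in the derivation itself.

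The only point that warrants a comment is consistency with the surrounding results. Because $x^{\star}$ is a fixed point, the flow satisfies $F^t\left(x\right) = x^{\star}$ for every $t \geq T$, so the eigenfunction relation in fact forces $\phi\left(x^{\star}\right) = e^{\lambda t}\phi\left(x\right)$ for all such $t$. When $\lambda \neq 0$ this can hold only if $\phi\left(x\right) = 0$, which is in turn consistent with Lemma~\ref{fp zero lemma} (which already gives $\phi\left(x^{\star}\right) = 0$). Thus the corollary's identity is informative in its own right chiefly when $\lambda = 0$; for $\lambda \neq 0$ it collapses to the statement that any point reaching a critical point in finite time must itself be a zero of $\phi$. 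I would flag this interpretive wrinkle, rather than the algebra, as the subtle part worth addressing.
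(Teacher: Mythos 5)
Your proof is correct and is exactly the argument the paper intends: the corollary follows immediately from evaluating the eigenfunction relation $\phi\left(F^t\left(x\right)\right) = e^{\lambda t}\phi\left(x\right)$ at $t = T$, substituting $F^T\left(x\right) = x^{\star}$, and rearranging (the paper states the corollary without a separate proof precisely because it is this one-step consequence). Your added observation --- that for $\lambda \neq 0$ the identity collapses via Lemma~\ref{fp zero lemma} to $\phi\left(x\right) = 0$ for any point reaching the fixed point in finite time --- is a correct and worthwhile clarification of when the statement carries content.
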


This implies that if we start at a point to which trajectories converge in finite time, we can work backwards to calculate the value of the eigenfunctions on the fixed point's basin of attraction.

\begin{theorem}
\label{escaping thm}
If $\relambda > 0$ and $\phi \left(x\right)$ is non-zero and bounded on a closed region $M$, then there exists $T$ such that $F^t\left(x\right) \notin M$ for $t > T$, $x\in M$.
\end{theorem}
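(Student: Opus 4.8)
The plan is to exploit the eigenfunction evolution law together with the fact that $\relambda>0$ forces $\left|\phi\right|$ to grow exponentially along every trajectory, while boundedness on $M$ caps how large $\left|\phi\right|$ can become for points that remain in $M$. For any $x$, the relation $\phi\left(F^t\left(x\right)\right) = e^{\lambda t}\phi\left(x\right)$ gives $\left|\phi\left(F^t\left(x\right)\right)\right| = e^{\relambda\, t}\left|\phi\left(x\right)\right|$, since $\left|e^{\lambda t}\right| = e^{\relambda\, t}$. Let $B = \sup_{y\in M}\left|\phi\left(y\right)\right| < \infty$ by boundedness, and fix $x \in M$ with $\left|\phi\left(x\right)\right| = c > 0$ by the non-vanishing hypothesis. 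If $F^t\left(x\right) \in M$, then necessarily $e^{\relambda\, t} c \le B$, which fails once $t > \tfrac{1}{\relambda}\ln\left(B/c\right)$; hence $F^t\left(x\right)\notin M$ beyond that time.

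First I would record a monotonicity observation that upgrades ``leaves $M$'' to ``leaves $M$ and never returns,'' which is what the conclusion $F^t\left(x\right)\notin M$ for all $t>T$ requires. Whenever the trajectory lies in $M$, the value of $\left|\phi\right|$ along it is $e^{\relambda\, t}c$, strictly increasing in $t$; consequently the values of $\left|\phi\right|$ seen at successive visits to $M$ can only increase, so once this quantity exceeds $B$ it can never again be at most $B$, and re-entry into $M$ is impossible after the first exit. This yields, for each $x\in M$, a threshold $T_x = \tfrac{1}{\relambda}\ln\left(B/\left|\phi\left(x\right)\right|\right)$ past which the trajectory stays outside $M$.

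The main obstacle is obtaining a single $T$ that serves uniformly over all $x\in M$, rather than the pointwise $T_x$ above. Since $T_x \to \infty$ as $\left|\phi\left(x\right)\right|\to 0$, a uniform bound requires $\inf_{x\in M}\left|\phi\left(x\right)\right| = m > 0$, in which case $T = \tfrac{1}{\relambda}\ln\left(B/m\right)$ suffices. The hypothesis as written only asserts that $\phi$ is non-zero on $M$, which gives $\left|\phi\left(x\right)\right|>0$ pointwise but not a positive infimum; this gap would close automatically were $\phi$ continuous on a compact $M$, but the paper deliberately avoids continuity assumptions. I would therefore either read ``non-zero and bounded'' as $0 < m \le \left|\phi\right| \le B$ on $M$, giving the uniform $T$ at once, or else state the conclusion in its pointwise form. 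I expect reconciling this quantifier to be the sole delicate point, the exponential-growth estimate itself being immediate.
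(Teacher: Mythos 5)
Your proposal is correct and follows essentially the same route as the paper: the paper's proof likewise posits bounds $0 < \epsilon \leq \left|\phi\left(x\right)\right| \leq C$ on $M$, takes $T = \frac{1}{\lambda}\ln\left(C/\epsilon\right)$, and uses the strict growth of $\left|\phi\right|$ along trajectories to rule out re-entry for $t > T$. The quantifier gap you flag is resolved in the paper exactly by your first option --- its proof simply asserts that ``non-zero and bounded'' yields a positive infimum $\epsilon$ --- and your version is, if anything, slightly more careful, both in making that reading explicit and in writing the growth factor as $e^{\mathrm{Re}\left(\lambda\right)t}$ where the paper uses $\lambda$ itself despite allowing complex eigenvalues.
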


\begin{proof}

If $\left|\phi\left(x\right)\right| > 0$ and bounded, then $\exists \ \epsilon,C$ such that

\begin{align}
0 < \epsilon &\leq \left|\phi\left(x\right) \right| \leq C \ \forall \ x \in M \\
\phi\left(F^t\left(x\right)\right) &= e^{\lambda t} \phi\left(x\right) \\
T &\equiv \frac{1}{\lambda} \ln \left(\frac{C}{\epsilon}\right) \\
\left|\phi\left(F^T\left(x\right) \right)\right| &= e^{\lambda T} \left|\phi\left(x\right)\right| \nonumber \\
&= \frac{C}{\epsilon} \left|\phi\left(x\right)\right| \nonumber \\
&\geq \frac{C}{\epsilon} \epsilon = C
\end{align}

Therefore, $F^T\left(x\right) \notin M$.  Furthermore, if $t>T$, then 

\begin{align}
\left|\phi\left(F^t\left(x\right)\right)\right| &= \left|e^{\lambda t} \phi\left(x\right)\right| \nonumber \\
&> \left| \phi\left(F^T\left(x\right)\right) \right| = \left|e^{\lambda T} \phi\left(x\right)\right|
\end{align}

\noindent so $F^t\left(x\right) \notin M$.

\end{proof}

\begin{corollary}
\label{zero phi cor}
If $\phi\left(x\right)$ is bounded, $\relambda > 0$, and $F^t\left(x\right)$ is defined as $t \rightarrow \infty$, then $\phi\left(x\right) = 0$.  Conversely, if $\relambda < 0$ and 

\begin{gather}
\lim_{t \rightarrow -\infty} F^t\left(x\right) = x^{\star} \ \forall \ x \in M
\end{gather}

\noindent then $\phi\left(x\right) = 0 \ \forall \ x \ \in M$.
\end{corollary}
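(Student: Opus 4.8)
The plan is to treat the two statements as mirror images of one another under time reversal, deriving each from the eigenfunction identity $\phi\left(F^t\left(x\right)\right) = e^{\lambda t}\phi\left(x\right)$ together with the boundedness of $\phi$. In both cases the mechanism is the same: rearrange the identity to isolate $\phi\left(x\right)$, then drive the appropriate exponential prefactor to zero.

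For the first claim I would start from $\left|\phi\left(F^t\left(x\right)\right)\right| = e^{\relambda t}\left|\phi\left(x\right)\right|$ and rearrange to $\left|\phi\left(x\right)\right| = e^{-\relambda t}\left|\phi\left(F^t\left(x\right)\right)\right|$. Since $F^t\left(x\right)$ is defined for all $t \geq 0$ and $\phi$ is bounded, say $\left|\phi\right| \leq C$ on the relevant region, the factor $\left|\phi\left(F^t\left(x\right)\right)\right|$ stays bounded by $C$ while $e^{-\relambda t} \to 0$ as $t \to \infty$ (because $\relambda > 0$). Letting $t \to \infty$ then forces $\left|\phi\left(x\right)\right| = 0$. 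This is essentially the contrapositive of Theorem~\ref{escaping thm}: a bounded, nonzero eigenfunction with positive-real-part eigenvalue cannot have a forward orbit that remains defined in $M$ for all time, so if the orbit does persist the eigenfunction must vanish at the starting point.

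For the converse I would run the identical argument in backward time. Here $\relambda < 0$, so I again write $\left|\phi\left(x\right)\right| = e^{-\relambda t}\left|\phi\left(F^t\left(x\right)\right)\right|$, but now send $t \to -\infty$. Because $-\relambda > 0$ and $t \to -\infty$, the prefactor $e^{-\relambda t} \to 0$ exactly as before, so the two halves are genuinely symmetric. The hypothesis $\lim_{t \rightarrow -\infty} F^t\left(x\right) = x^{\star}$ guarantees that the backward orbit eventually enters any neighborhood of $x^{\star}$, so if $\phi$ is bounded on $M$ (equivalently on a neighborhood of $x^{\star}$ containing the orbit tail) then $\left|\phi\left(F^t\left(x\right)\right)\right| \leq C$ along that tail. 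The product vanishes in the limit, giving $\phi\left(x\right) = 0$ for every $x \in M$.

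The step I expect to require the most care is justifying boundedness of $\phi$ along the infinite orbit in each case. For the forward claim this is hypothesized directly, but for the converse I must confirm that the backward orbit stays inside a region on which $\phi$ is controlled, since the corollary's boundedness assumption is stated explicitly only for the first half and is carried over as a standing assumption. Because the backward orbit converges to $x^{\star}$, its tail lies in any prescribed neighborhood, and Lemma~\ref{fp zero lemma} already supplies $\phi\left(x^{\star}\right) = 0$; combining these makes the boundedness bookkeeping routine rather than delicate, and the single limit argument then closes both halves at once.
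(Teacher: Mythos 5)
Your proposal is correct and matches the paper's intended argument: the paper leaves this corollary unproved as a direct consequence of Theorem~\ref{escaping thm}, and your rearrangement $\left|\phi\left(x\right)\right| = e^{-\relambda t}\left|\phi\left(F^t\left(x\right)\right)\right|$, with the right-hand factor bounded and the exponential prefactor vanishing (as $t \to \infty$ for $\relambda > 0$, as $t \to -\infty$ for $\relambda < 0$), is exactly that argument phrased as a limit rather than a contradiction. Your care about boundedness of $\phi$ along the backward orbit tail is the right bookkeeping, since the corollary's boundedness hypothesis is indeed a standing assumption for both halves (as the paper's remark following the corollary confirms).
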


The implication of this Corollary is that if $\relambda > 0$, then $\phi\left(x\right) = 0$ on a critical point's stable manifold, and if $\relambda < 0$, then $\phi\left(x\right) = 0$ on a critical point's unstable manifold as long as $\lim_{t \rightarrow \pm \infty} F^t\left(x\right)$ defined and $\phi\left(x\right)$ is bounded.

\begin{corollary}
If $M$ contains a fixed point $x^{\star}$, and if $\left|\phi\left(x\right)\right| > 0$ on $M / x^{\star}$ and bounded on $M$, then $\relambda > 0$ implies that $x^{\star}$ is unstable on $M$.
\end{corollary}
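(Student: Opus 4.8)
The plan is to show that every trajectory starting arbitrarily close to $x^{\star}$, but not exactly at it, must leave $M$ in finite forward time; this is precisely the failure of Lyapunov stability, i.e. instability of $x^{\star}$ on $M$. I would first invoke Lemma~\ref{fp zero lemma}: because $\relambda > 0$ forces $\lambda \neq 0$, the eigenfunction must vanish at the fixed point, so $\phi\left(x^{\star}\right) = 0$. This is consistent with the hypothesis that $\left|\phi\right| > 0$ only on $M \setminus \{x^{\star}\}$, and it identifies $x^{\star}$ as the unique zero of $\phi$ in $M$.

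Next I would run the escape argument pointwise. By hypothesis there is a constant $C$ with $\left|\phi\left(x\right)\right| \leq C$ for all $x \in M$. For any $x \in M \setminus \{x^{\star}\}$ we have $\left|\phi\left(x\right)\right| > 0$, and the eigenfunction relation gives $\left|\phi\left(F^t\left(x\right)\right)\right| = e^{\relambda\, t}\left|\phi\left(x\right)\right|$, which tends to $\infty$ as $t \to \infty$ since $\relambda > 0$. If the forward trajectory of $x$ stayed in $M$ for all $t \geq 0$, then $\left|\phi\left(F^t\left(x\right)\right)\right|$ would remain bounded by $C$, a contradiction. Hence $F^t\left(x\right)$ exits $M$ at some finite time $T_x$. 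This is the pointwise content of Theorem~\ref{escaping thm} applied to a single orbit rather than uniformly, and it is equivalent to the contrapositive of Corollary~\ref{zero phi cor}.

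I would then translate this into instability. Treating $M$ (or a fixed open neighborhood of $x^{\star}$ contained in $M$) as the witnessing neighborhood $U$, note that any neighborhood $V$ of $x^{\star}$ contains points of $M \setminus \{x^{\star}\}$ arbitrarily close to $x^{\star}$; every such point has a trajectory that leaves $M \supseteq U$ at time $T_x$. Since no neighborhood of $x^{\star}$ is forward-invariant, $x^{\star}$ is unstable on $M$.

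The main obstacle I anticipate is not the escape estimate itself but the \emph{loss of uniformity}: the escape time $T_x = \tfrac{1}{\relambda}\ln\!\left(C/\left|\phi\left(x\right)\right|\right)$ blows up as $x \to x^{\star}$, because $\left|\phi\left(x\right)\right| \to 0$ there. Consequently Theorem~\ref{escaping thm} cannot be applied directly to all of $M$, since its hypothesis that $\phi$ be nonzero on the whole region fails at $x^{\star}$, and no common finite $T$ exists. The resolution is to recognize that Lyapunov instability only demands that each neighborhood of $x^{\star}$ contain \emph{some} escaping orbit; here every orbit except the trivial one at $x^{\star}$ escapes, so the non-uniform escape times are harmless.
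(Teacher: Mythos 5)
Your proof is correct and takes essentially the approach the paper intends: the corollary is stated there without its own proof, as a direct consequence of Theorem~\ref{escaping thm} and Corollary~\ref{zero phi cor}, namely the pointwise escape argument combining the growth relation $\left|\phi\left(F^t\left(x\right)\right)\right| = e^{\text{Re}\left(\lambda\right) t}\left|\phi\left(x\right)\right|$ with the boundedness of $\phi$ on $M$. Your explicit handling of the non-uniformity of the escape times near $x^{\star}$ (why Theorem~\ref{escaping thm} cannot be invoked verbatim, and why non-uniform escape still suffices for Lyapunov instability) is a careful point the paper glosses over, and it strengthens rather than alters the argument.
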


Theorem \ref{escaping thm} and its associated corollaries essentially provide the conditions under which $\relambda > 0$ can be used to imply instability without relying on continuity assumptions.

\begin{theorem}
\label{lyapunov thm}
If the set $M\left(c\right) = \left\{ x: \left| \phi\left(x\right)\right|\leq c\right\}$ is closed and $\relambda \leq 0$, then any trajectory that enters $M\left(c\right)$ remains in $M\left(c\right)$.
\end{theorem}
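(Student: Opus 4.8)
The plan is to show that the modulus of the eigenfunction is non-increasing along forward trajectories and then deduce forward invariance of the sublevel set $M(c)$ directly from this. The only ingredient I would need is the eigenfunction relation $\phi\left(F^t\left(x\right)\right) = e^{\lambda t}\phi\left(x\right)$ already used in Lemma \ref{fp zero lemma} and the preceding theorems, together with the semigroup property of the flow.

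First I would take moduli in the eigenfunction relation in order to eliminate the imaginary part of $\lambda$. Since $\left|e^{\lambda t}\right| = e^{\relambda\, t}$,
\begin{gather}
\left|\phi\left(F^t\left(x\right)\right)\right| = e^{\relambda\, t} \left|\phi\left(x\right)\right|.
\end{gather}
For $\relambda \leq 0$ and any $t \geq 0$ we have $e^{\relambda\, t} \leq 1$, so $\left|\phi\left(F^t\left(x\right)\right)\right| \leq \left|\phi\left(x\right)\right|$; that is, $|\phi|$ is monotonically non-increasing along forward orbits. This is exactly the Lyapunov-type behaviour the label of the theorem suggests, with $|\phi|$ playing the role of a (generalized) Lyapunov function and $M(c)$ its sublevel set.

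Next I would use the semigroup property of the flow to turn the hypothesis that a trajectory enters $M(c)$ into a statement about a shifted initial condition. Suppose the trajectory through $x_0$ lies in $M(c)$ at some time $t_0$, i.e.\ $\left|\phi\left(F^{t_0}\left(x_0\right)\right)\right| \leq c$. Setting $y = F^{t_0}(x_0)$ and applying the monotonicity bound together with the identity $F^{t}(x_0) = F^{t-t_0}(y)$ for $t \geq t_0$ gives
\begin{gather}
\left|\phi\left(F^t\left(x_0\right)\right)\right| = e^{\relambda\,(t-t_0)}\left|\phi\left(y\right)\right| \leq \left|\phi\left(y\right)\right| \leq c,
\end{gather}
so $F^t(x_0) \in M(c)$ for all $t \geq t_0$. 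Hence the trajectory remains in $M(c)$, which is the claim.

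I do not expect a genuine obstacle here: unlike the surrounding instability statements, this result is essentially immediate and does not require $\phi$ to be bounded away from zero. The one point I would be careful to address is the role of the closedness hypothesis. The modulus inequality itself does not use it, and the derived bound $\left|\phi\left(F^t\left(x_0\right)\right)\right| \leq c$ already forces $F^t(x_0) \in M(c)$. Closedness matters only because, in the minimal-assumption setting of this paper, $\phi$ is \emph{not} assumed continuous, so the sublevel set need not be topologically closed a priori; requiring it to be closed is what guarantees that $M(c)$ is a bona fide invariant set for the Lyapunov interpretation. I would also note in passing that the argument needs the flow to remain defined for all forward time on $M(c)$, which is guaranteed provided $M(c)$ lies inside the compact forward-invariant set $X$ on which $\phi$ is defined.
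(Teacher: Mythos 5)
Your proof is correct and takes essentially the same approach as the paper: both rest on the fact that $\left|\phi\left(F^t\left(x\right)\right)\right| = e^{\relambda\, t}\left|\phi\left(x\right)\right|$ is non-increasing in forward time when $\relambda \leq 0$, the paper merely phrasing this as a contradiction while you argue directly (and, helpfully, derive the monotonicity explicitly rather than asserting it).
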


\begin{proof}
Assume that a trajectory that starts in $M\left(c\right)$ leaves $S\left(c\right)$.  This would require there to exist $x\left(t_1\right) \in M\left(c\right)$ such that $\left|\phi\left(x\left(t_1\right)\right)\right| \leq c$ and $x\left(t_2\right) \notin M\left(c\right)$, where $t_2 > t_1$ such that $\left|\phi \left(x \left(t_2\right)\right)\right| > c$.  However, $\left|\phi\left(F^t\left(x\right)\right)\right|$ is always non-increasing.  Therefore

\begin{gather}
\phi\left(x\left(t_1\right)\right) \geq \phi\left(x\left(t_2\right)\right), \ t_1 \leq t_2
\end{gather}

\noindent which is a contradiction.


\end{proof}

Theorem \ref{lyapunov thm} then provides conditions under which $\relambda < 0$ implies stability, again, without assuming eigenfunction continuity.







\begin{theorem}
If $0 < \epsilon \leq \phi\left(x\right) \leq C$ for $x \in M$, $\relambda \neq 0$, and $\lim \limits_{t\rightarrow \infty} F^t\left(x\right)$ is defined, then any trajectory that enters $M$ will exit $M$.
\end{theorem}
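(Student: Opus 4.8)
The plan is to argue by contradiction, exploiting the fact that $|\phi|$ grows or decays exponentially along trajectories according to the sign of $\relambda$. The one governing identity, inherited from the eigenfunction relation $\phi(F^t(x)) = e^{\lambda t}\phi(x)$, is
\begin{gather}
\left|\phi\left(F^t\left(x\right)\right)\right| = e^{\relambda\, t}\left|\phi\left(x\right)\right|,
\end{gather}
and the hypothesis $0 < \epsilon \le \phi(x) \le C$ supplies the two-sided bound $\epsilon \le |\phi(x)| \le C$ on $M$.

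First I would suppose, for contradiction, that some trajectory enters $M$ at a time $t_0$ and never leaves. Writing $y = F^{t_0}(x)$, this means $F^s(y) \in M$, and hence $\epsilon \le |\phi(F^s(y))| \le C$, for every $s \ge 0$; the assumption that $\lim_{t\to\infty} F^t(x)$ is defined guarantees the forward flow exists for all time, so these quantities are all meaningful. If $\relambda > 0$, the same computation as in Theorem \ref{escaping thm} applies: $|\phi(F^s(y))| \ge e^{\relambda\, s}\epsilon \to \infty$, which violates the upper bound $|\phi(F^s(y))| \le C$ at large $s$. If $\relambda < 0$, I would instead let $s \to \infty$ in the governing identity, where $e^{\relambda\, s} \to 0$ forces $|\phi(F^s(y))| \to 0$, in contradiction with the lower bound $|\phi(F^s(y))| \ge \epsilon > 0$. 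Either way, no trajectory can remain in $M$ indefinitely.

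The main subtlety, and the reason the global-existence hypothesis is essential, lies in the $\relambda < 0$ branch: there the escape from $M$ is not realized at any finite, explicitly computable time (as it is when $\relambda > 0$) but only in the limit $t \to \infty$, so one genuinely needs $F^s(y)$ defined for all $s \ge 0$ to drive $|\phi|$ below $\epsilon$. A secondary point I would handle cleanly is the reading of the stated bounds when $\phi$ is complex-valued: phrasing the whole argument through $|\phi|$ and $\relambda$ makes it cover real, purely imaginary, and genuinely complex eigenvalues uniformly, with $\relambda \ne 0$ as the only operative requirement.
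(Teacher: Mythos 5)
Your proof is correct and follows essentially the same route as the paper's: split on the sign of $\relambda$, use the exponential identity $|\phi(F^t(x))| = e^{\relambda t}|\phi(x)|$ to force blow-up past $C$ when $\relambda > 0$ and decay below $\epsilon$ when $\relambda < 0$, contradicting the two-sided bound on $M$. Your write-up is simply a more careful rendering of the paper's terse argument, with the contradiction framing, the role of the global-existence hypothesis, and the complex-eigenvalue case made explicit.
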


\begin{proof}
If $\relambda > 0$, then

\begin{gather}
\lim_{t\rightarrow \infty} \phi\left(F^t \left(x\right)\right) = \infty \notin M
\end{gather}

Since $\phi\left(x\right)$ is bounded away from $0$ for $x \in M$, this holds.  Similarly if $\relambda < 0$, 

\begin{gather}
\lim_{t\rightarrow \infty} \phi\left(F^t \left(x\right)\right) = 0 \notin M
\end{gather}

\end{proof}

Most machine learning based approaches learn Koopman representations by training on time-series data and tend to learn the exact discretization. It can easily be shown that for exact discretization

\begin{gather}
x_{t+1} = x_t + \int\limits_0^{\Delta t} f\left(F^{\tau}\left(x_t\right)\right) d \tau = h \left(x_t\right)
\end{gather}

\noindent the negative eigenvalues get mapped to the interior of the unit circle, positive eigenvalues get mapped to the exterior, and zero eigenvalues get mapped to the boundary of the unit circle. This preserves all of the stability properties.  As such, the stability results proven for continuous time systems in this section carry over to their discrete time equivalents learned from the time-series data. 

\subsection{CONTINUITY}

The stability proofs did not assume continuity, but it is worth looking into continuity in more detail to see when it holds and when it does not.

\begin{lemma}
\label{basin invariance}
If $\lambda = 0$ and

\begin{gather}
\lim_{t \rightarrow \infty} F^t\left(x\right) = x^{\star} \ \forall \ x \in M \\
\lim_{x \rightarrow x^{\star}} \phi\left(x\right) = \phi\left(x^{\star}\right) = c
\end{gather}

\noindent then $\phi \left(x\right) = c$ for $x \in M$.

\end{lemma}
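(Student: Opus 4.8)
The plan is to exploit the fact that the eigenvalue $\lambda = 0$ forces $\phi$ to be constant along every trajectory, and then to transport the limiting value $c$ attained at $x^{\star}$ back to an arbitrary point of $M$ using the convergence of the flow together with the continuity of $\phi$ at $x^{\star}$.

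First I would invoke the defining eigenfunction relation $\phi\left(F^t\left(x\right)\right) = e^{\lambda t}\phi\left(x\right)$. Since $\lambda = 0$, this collapses to $\phi\left(F^t\left(x\right)\right) = \phi\left(x\right)$ for every $x \in M$ and every $t \geq 0$; that is, $\phi$ is invariant under the flow $F^t$. This is the same mechanism that underlies the level-set and partitioning results mentioned in the introduction, specialized to the present hypotheses.

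Next, I would fix an arbitrary $x \in M$. By the invariance just established, $\phi\left(x\right) = \phi\left(F^t\left(x\right)\right)$ holds for all $t$, so the left-hand side is unchanged if I let $t \to \infty$ on the right:
\begin{gather}
\phi\left(x\right) = \lim_{t \rightarrow \infty} \phi\left(F^t\left(x\right)\right).
\end{gather}
By hypothesis $F^t\left(x\right) \rightarrow x^{\star}$ as $t \to \infty$, and $\phi$ satisfies $\lim_{y \rightarrow x^{\star}} \phi\left(y\right) = c$. Composing these two limits yields $\lim_{t \rightarrow \infty} \phi\left(F^t\left(x\right)\right) = c$, and therefore $\phi\left(x\right) = c$. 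Since $x \in M$ was arbitrary, $\phi \equiv c$ on $M$.

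The only step requiring genuine care is the limit interchange in the display above: I must verify that the continuity hypothesis at $x^{\star}$ is precisely what licenses passing $\phi$ through the limit $F^t\left(x\right) \rightarrow x^{\star}$. This follows from the sequential characterization of $\lim_{y \rightarrow x^{\star}} \phi\left(y\right) = c$ applied to the trajectory $F^t\left(x\right)$; notably, no continuity of $\phi$ anywhere other than at $x^{\star}$ is needed, which is consistent with the paper's emphasis on minimal (boundedness rather than global continuity) assumptions.
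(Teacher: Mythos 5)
Your proposal is correct and follows essentially the same argument as the paper: use $\lambda = 0$ to conclude $\phi$ is invariant along trajectories, then combine the convergence $F^t(x) \to x^{\star}$ with continuity of $\phi$ at $x^{\star}$ to transport the value $c$ back to every point of $M$. Your explicit remark that continuity is needed only at $x^{\star}$ is a nice clarification, but the mechanism is identical to the paper's proof.
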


\begin{proof}

Since $\lambda = 0$, for $x \in M$, 

\begin{gather}
\phi\left(F^t\left(x\right)\right) = \phi\left(x\right)  \ \forall \ t
\end{gather}

\noindent so $\phi\left(x\right)$ is constant along any trajectory in $M$ (and thus an invariant of the trajectory).  The stability of $x^{\star}$ and continuity of $\phi\left(x\right)$ at $x^{\star}$, moreover, imply that

\begin{gather}
\phi\left(x\right) = \lim_{t\rightarrow \infty} \phi\left(F^t\left(x\right)\right) = \lim_{x \rightarrow x^{\star}} \phi \left(x\right) \\
\Rightarrow \phi\left(x\right) = \phi\left(x^{\star}\right) = c
\end{gather}

\end{proof}

For trajectory invariants (i.e., eigenfunctions with zero eigenvalues associated with them), continuity at the critical point implies continuity over the whole basin of attraction.

\begin{theorem}
\label{thm:constantCont}
Suppose $\mc{K}^t \phi = \phi$ (i.e., $\lambda = 0$), and suppose there exist isolated fixed points $x_A$ and $x_B$ such that $\phi(x_A) \neq \phi(x_B)$. Then $\phi\left(x\right)$ is discontinuous.
\end{theorem}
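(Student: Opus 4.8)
The plan is to argue by contradiction: assume $\phi$ is continuous on all of $X$ and derive an incompatibility with the existence of two isolated fixed points carrying different values. The starting point is the same observation used in Lemma \ref{basin invariance}: since $\lambda = 0$, we have $\phi(F^t(x)) = \phi(x)$ for every $t$ and every $x$, so $\phi$ is constant along trajectories and each level set $\{x : \phi(x) = c\}$ is a flow-invariant set. If $\phi$ is in addition continuous, these level sets are closed, and this is the structural fact I would exploit.

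First I would record the consequences of continuity at the fixed points themselves. Because $x_A$ and $x_B$ are fixed, their orbits are the singletons $\{x_A\}$ and $\{x_B\}$, and continuity pins down the limiting values $\phi(x_A)$ and $\phi(x_B)$ along any approaching sequence. Next I would invoke compactness and forward invariance of $X$: every forward orbit is defined for all $t \geq 0$ and has a nonempty invariant $\omega$-limit set on which $\phi$ is constant, equal to that orbit's value, by continuity together with invariance. The aim is to show that orbits accumulating near $x_A$ must carry the value $\phi(x_A)$, and likewise for $x_B$, so that $\phi$ is locally forced to $\phi(x_A)$ throughout the basin of $x_A$ and to $\phi(x_B)$ throughout the basin of $x_B$.

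For attracting fixed points the cleanest route is to apply Lemma \ref{basin invariance} directly: on the basin $M_A = \{x : F^t(x) \to x_A\}$ the continuous invariant $\phi$ must equal $\phi(x_A)$ identically, and on $M_B$ it must equal $\phi(x_B)$. These basins are disjoint, and since $\phi(x_A) \neq \phi(x_B)$ they cannot share a point of continuity. At a point where the closures of $M_A$ and $M_B$ meet (the separatrix), $\phi$ is approached from the $M_A$ side by points of value $\phi(x_A)$ and from the $M_B$ side by points of value $\phi(x_B)$, so the one-sided limits disagree and $\phi$ cannot be continuous there, giving the desired contradiction.

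The step I expect to be the main obstacle is making this boundary argument rigorous without presuming that the fixed points are attracting. Lemma \ref{basin invariance} applies only when trajectories converge to $x_A$ and $x_B$, whereas for a saddle, source, or center $\phi$ need not be locally constant; indeed, for a conserved quantity of a Hamiltonian flow possessing two centers at different energy levels, $\phi$ is smooth yet takes distinct values at the two fixed points, so some asymptotic-stability hypothesis appears to be essential. I therefore expect the proof to rely on $x_A$ and $x_B$ being the attractors of genuine basins that cover a neighborhood, and the crux is to establish that the basin boundary is nonempty and is simultaneously a limit of both level sets. This is where a connectedness property of $X$, combined with the closedness of the level sets established in the first step, would be needed to force the two differing values to collide along a shared boundary and thereby manufacture the discontinuity.
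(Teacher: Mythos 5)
Your opening steps coincide with the paper's own proof: both hinge on Lemma \ref{basin invariance}, which forces an eigenfunction with $\lambda = 0$ that is continuous at the fixed points to be identically $\phi(x_A)$ on the basin of attraction of $x_A$ and identically $\phi(x_B)$ on that of $x_B$. The finishes differ. You look for a point where the two basin closures meet and argue that the one-sided limits disagree there; the paper instead argues globally: constancy on every basin makes the image $\phi(\mathbb{R}^n)$ countable, and a continuous function with countable image on a connected domain must be constant (its image is connected, and a connected subset of $\C$ with two distinct points is uncountable), contradicting $\phi(x_A) \neq \phi(x_B)$. Conditional on its premises, the paper's finish is more robust than yours, because your separatrix argument tacitly needs $\overline{M_A} \cap \overline{M_B} \neq \emptyset$, which can fail when a third basin (or an invariant region that is not a basin at all) sits between $M_A$ and $M_B$; the connectedness-plus-countability argument needs no such adjacency.

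The obstacle you flag at the end, however, is not merely a gap in your write-up --- it is a genuine defect of the theorem as stated, and the paper's proof does not escape it either. Your Hamiltonian scenario is a concrete counterexample: for the double well $\dot{x} = y$, $\dot{y} = x - x^3$, the energy $H(x,y) = \tfrac{1}{2}y^2 - \tfrac{1}{2}x^2 + \tfrac{1}{4}x^4$ is smooth, satisfies $\mc{K}^t H = H$, and takes the value $0$ at the isolated fixed point $(0,0)$ and $-\tfrac{1}{4}$ at the isolated fixed points $(\pm 1, 0)$; restricting to a compact invariant sublevel set of $H$ keeps this within the paper's standing assumptions. The paper's step ``as such, $\phi(\Re^n)$ is a countable set'' is exactly where this difficulty is swept away: that claim presupposes that the basins of attraction of the fixed points cover the state space, which fails for centers, saddles, and any system with periodic orbits (in the double-well example the image of $H$ is an interval, not a countable set). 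So your instinct is correct: some attractivity hypothesis --- attracting fixed points whose basins cover, or at least are dense in, the domain --- must be added for the statement to be true, and neither your boundary-collision argument nor the paper's countability argument can be completed without it.
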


\begin{proof}
If $\phi\left(x\right)$ is continuous at its fixed points, then Lemma \ref{basin invariance}, $\phi\left(x\right)$ is constant on the fixed points' respective basins of attraction. As such, $\phi(\Re^n)$ is a countable set. In order for $\phi$ to be continuous, it has to be a globally constant function. Therefore, $\phi(x_A) \neq \phi(x_B)$ implies that $\phi$ is discontinuous.  

If, on the other hand, $\phi\left(x\right)$ is discontinuous at any of its fixed points, then it is tautologically true that $\phi\left(x\right)$ is discontinuous.
\end{proof}

\begin{theorem}
\label{closed traj thm}
Let $\mc{K}^t \phi = e^{\lambda t} \phi$ and $\relambda \neq 0$. Let 

\begin{gather}
M_x = \{p ~|~ F^t(x) = p,~t \in [-\infty, \infty]\}
\end{gather}

\noindent and let $cl(M_x)$ denote the closure of the set $M_x$. Suppose, $cl(M_x)$ is bounded and $\phi$ is continuous in $cl(M_x)$, then $\phi(x) = 0~\forall \ x \in cl(M_x)$. 
\end{theorem}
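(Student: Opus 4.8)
The plan is to pit the exponential behaviour of $|\phi|$ along the orbit $M_x$ against the boundedness that continuity forces on a compact set. The eigenfunction relation gives, for every $t$, $|\phi(F^t(x))| = e^{\relambda\, t}\,|\phi(x)|$ along the full orbit, so if $\phi$ does not already vanish at the seed point $x$, then $|\phi|$ grows without bound in the direction in which $\relambda\, t \to +\infty$ (forward time when $\relambda > 0$, backward time when $\relambda < 0$). Since all these points lie in $M_x \subseteq cl(M_x)$, such unbounded growth is incompatible with $\phi$ being bounded on $cl(M_x)$, which is exactly what continuity on a compact set provides.

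First I would note that $cl(M_x)$ is closed by construction and bounded by hypothesis, hence compact in $\Re^d$ by the Heine--Borel theorem; since $\phi$ is continuous on this compact set, $|\phi|$ attains a finite maximum $C$, so that $|\phi(p)| \leq C$ for all $p \in cl(M_x)$. Next I would evaluate the eigenfunction identity at the seed point: for every $t$ the point $F^t(x)$ lies in $M_x \subseteq cl(M_x)$, so $e^{\relambda\, t}\,|\phi(x)| = |\phi(F^t(x))| \leq C$. Letting $t \to +\infty$ when $\relambda > 0$, or $t \to -\infty$ when $\relambda < 0$, drives $e^{\relambda\, t} \to \infty$, which is impossible unless $\phi(x) = 0$. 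This is essentially the escape argument of Theorem \ref{escaping thm} and Corollary \ref{zero phi cor}, now deployed on a compact invariant orbit.

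Having forced $\phi(x) = 0$ at the seed point, the eigenfunction relation immediately yields $\phi(F^t(x)) = e^{\lambda t}\phi(x) = 0$ for every $t$, so $\phi$ vanishes identically on $M_x$. Finally I would extend this to the closure: every point of $cl(M_x)$ is a limit of a sequence in $M_x$, and continuity of $\phi$ on $cl(M_x)$ then gives $\phi \equiv 0$ on $cl(M_x)$, which is the claim.

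The main obstacle is not the growth estimate but ensuring that the entire orbit is legitimately available, since the argument needs $F^t(x)$ defined for all $t \to -\infty$ (when $\relambda < 0$) or all $t \to +\infty$ (when $\relambda > 0$). This is precisely where boundedness of $cl(M_x)$ does the real work: a solution confined to a compact subset of the domain of the smooth field $f$ cannot escape in finite time and therefore extends to all of $\Re$, so the limits $t \to \pm\infty$ are taken along genuinely defined trajectory points. I would state this consequence of boundedness explicitly, and I would take some care with the notation, proving $\phi(p) = 0$ for an arbitrary $p \in cl(M_x)$ rather than reusing the symbol $x$ for both the seed point and the quantified variable in the conclusion.
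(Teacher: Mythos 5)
Your proof is correct and follows essentially the same argument as the paper: compactness of $cl(M_x)$ together with continuity bounds $\left|\phi\right|$, and the eigenfunction relation forces $e^{\relambda\, t}\left|\phi(x)\right|$ to diverge as $t \rightarrow \infty$ (for $\relambda > 0$) or $t \rightarrow -\infty$ (for $\relambda < 0$) unless $\phi(x) = 0$. If anything, your version is slightly more careful than the paper's: you justify that the orbit exists for all time (no finite-time escape from a compact set) and you extend the vanishing from $M_x$ to $cl(M_x)$ by density and continuity, whereas the paper applies the divergence argument directly at arbitrary points of $cl(M_x)$, which tacitly relies on the closure being flow-invariant.
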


\begin{proof}
Since $cl(M_x)$ is a closed and bounded set and $\phi$ is continuous on $cl(M_x)$, $\phi$ is bounded on $cl(M_x)$. If $\phi$ is non-zero for any point $x \in cl(M_x)$, $[\mc{K}^t \phi](x) = e^{\lambda t} \phi(x)$ diverges as $t \rightarrow -\infty$ (for $\relambda < 0$) or as $t \rightarrow \infty$ (for $\relambda > 0$).  This contradicts the boundedness of $\phi$ in $cl(M_x)$. Therefore, $\phi(x) = 0~\forall \ x \in cl(M_x)$
\end{proof}

This implies that eigenfunctions with real eigenvalues evaluate to $0$ around closed trajectories.  If $cl(S_x)$ separates basins of attraction on which $\phi\left(x\right) \neq 0$, in general, then we would expect $\phi\left(x\right)$ to be discontinuous where those basins of attraction meet $cl(S_x)$.  Corollary \ref{zero phi cor} supports a similar conclusion for an eigenfunction with $\relambda < 0$ if the basins of attraction are separated by the unstable manifold of a different critical point.


\begin{theorem}
\label{thm:contEq}
Let $\mc{K}^t \phi = e^{\lambda t} \phi$ and $\relambda < 0$. Let $x^{\star}$ be an hyperbolic equilibrium point (i.e no center manifold). Let 

\begin{gather}
M(x^{\star}) = \{p ~|~ \lim_{t \rightarrow \infty} F^t(p) = x^{\star} \} \\
U(x^{\star}) = \{p ~|~ \lim_{t \rightarrow -\infty} F^t(p) = x^{\star}, \lim_{t \rightarrow \infty}F^t(p) \neq \infty \}
\end{gather}

\noindent represent the stable and the unstable manifold associated with $x^{\star}$. Suppose $\phi$ is $0$ in $U(x^{\star})$ (in accordance with Corollary \ref{zero phi cor}) and is uniformly continuous when restricted to basins of attraction. Then $\phi$ is continuous at $x^{\star}$.
\end{theorem}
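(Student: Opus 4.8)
The plan is to prove sequential continuity at $x^\star$: for an arbitrary sequence $x_n \to x^\star$, show $\phi(x_n) \to \phi(x^\star)$. First I would record that $\phi(x^\star) = 0$. Indeed $\relambda < 0$ forces $\lambda \neq 0$, so Lemma \ref{fp zero lemma} applied to the fixed point $x^\star$ gives $\phi(x^\star) = 0$; equivalently $x^\star \in U(x^\star)$, where $\phi$ vanishes by hypothesis. The goal is therefore to show $\phi(x_n) \to 0$.

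The central idea is to compare each $x_n$ with a nearby point of $U(x^\star)$, where $\phi$ is already known to be zero, doing so in a way that keeps the two points inside one basin so that the uniform-continuity hypothesis can be applied to the pair. Here I would invoke hyperbolicity of $x^\star$ (the absence of a center manifold): by the stable manifold theorem together with the associated invariant stable foliation, i.e. the local product structure furnished by the Hartman--Grobman normal form, there is a neighborhood $N$ of $x^\star$ and a continuous projection $\pi : N \to W^u_{loc}(x^\star) \subseteq U(x^\star)$ along local stable leaves, with $\pi(x^\star) = x^\star$, such that $x$ and $\pi(x)$ are forward-asymptotic (their forward orbits approach the same limit set).

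With this structure the estimate is short. For $n$ large, $x_n \in N$; set $y_n = \pi(x_n) \in U(x^\star)$. Continuity of $\pi$ and $\pi(x^\star) = x^\star$ give $y_n \to x^\star$, hence $|x_n - y_n| \to 0$. Because $x_n$ and $y_n$ lie on a common stable leaf, their forward orbits approach the same limit set, so they sit in the same basin of attraction (for $y_n \in U(x^\star)$ this limit set is bounded by the assumption $\lim_{t \to \infty} F^t \neq \infty$; for $x_n$ on the stable manifold $M(x^\star)$ the common basin is that of $x^\star$ itself). Since $\phi(y_n) = 0$ and $\phi$ is uniformly continuous on that basin with modulus $\omega$, I obtain $|\phi(x_n)| = |\phi(x_n) - \phi(y_n)| \le \omega(|x_n - y_n|) \to 0$, which is exactly $\phi(x_n) \to \phi(x^\star)$. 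If several basins accumulate at $x^\star$ I would take the maximum of the relevant moduli; generically only finitely many basins meet there, and otherwise one reads the hypothesis as supplying a single modulus common to all basins.

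The main obstacle is the geometric input rather than the final inequality: one must justify the continuous, basin-preserving projection onto $U(x^\star)$ along forward-asymptotic stable leaves. This is precisely where hyperbolicity is essential, since the clean contraction/expansion splitting it provides is what makes the local product structure exist and guarantees the implication ``same stable leaf $\Rightarrow$ same forward limit set $\Rightarrow$ same basin''; a center manifold would destroy both. Once that input is secured, the vanishing of $\phi$ on $U(x^\star)$ and the per-basin uniform continuity finish the argument, and notably no rate information beyond $|x_n - y_n| \to 0$ is needed.
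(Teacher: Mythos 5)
Your proof is correct and shares the same skeleton as the paper's: establish $\phi(x^{\star}) = 0$ via Lemma \ref{fp zero lemma}, compare each $x_n$ with a nearby companion point tied to $U(x^{\star})$ lying in the same basin, and let per-basin uniform continuity transfer the vanishing of $\phi$ on $U(x^{\star})$ to the sequence. The difference lies in how the companion points are produced. The paper translates: it picks fixed vectors $\alpha_m \rightarrow 0$ with $x^{\star} + \alpha_m \in U(x^{\star}) \cap A$ and runs a double limit, needing two terms in the triangle inequality (uniform continuity controls $\left|\phi(x_n) - \phi(x_n + \alpha_m)\right|$, while continuity within $A$ plus $\phi = 0$ on $U(x^{\star})$ controls $\left|\phi(x_n + \alpha_m)\right|$ as $n \rightarrow \infty$). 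You instead project along the stable foliation, $y_n = \pi(x_n) \in U(x^{\star})$, so that $\phi(y_n) = 0$ exactly and a single application of the modulus of continuity finishes the estimate. What your route buys is that it makes explicit the geometric fact the paper leaves implicit --- that points of basin $A$ near $x^{\star}$ have companions in $U(x^{\star}) \cap A$ arbitrarily close by --- and correctly attributes that fact to hyperbolicity. The cost is that your key step, ``same stable leaf $\Rightarrow$ same basin,'' is itself only asserted: leaf-wise contraction holds only while both orbits remain in the linearization neighborhood, so a complete argument needs an exit-time estimate plus asymptotic stability of the target attractor, and it can fail outright near heteroclinic connections where the unstable manifold limits onto another saddle rather than into a basin. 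This is not a worse gap than the paper's own --- its unproven assumptions that the $\alpha_m$ exist and that $x_n + \alpha_m \in A$ encode exactly the same geometric content --- but it is the same gap, relocated into the foliation machinery rather than removed.
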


\begin{proof}
Let $\epsilon > 0$. Let $x_n \rightarrow x^{\star}$. Since $\phi$ is uniformly continuous in basins of attraction, $\phi$ is continuous on $M(x^{\star})$. As such, we can assume that $x_n \notin M(x^{\star})$ without loss of generality. We can also assume $x_n \in A$ where $A$ is a basin of attraction for another critical point. In order to prove continuity of $\phi$ at $x^{\star}$, we need to show that $\left|\phi(x_n)\right|$ converges to $0$, since by Lemma \ref{fp zero lemma}, $\phi\left(x^{\star}\right) = 0$.

Let $\alpha_m \rightarrow 0$ and 

\begin{gather}
\lim_{n \rightarrow \infty} x_n + \alpha_m \in U(x^{\star}) \cap A \ \forall \ m
\end{gather}

Then,

\begin{align*}
\left|\phi(x_n)\right| &= \left|\phi(x_n) + \phi(x_n + \alpha_m) - \phi(x_n + \alpha_m)\right| \\
&\leq \left|\phi(x_n) - \phi(x_n + \alpha_m) \right| +  \left| \phi(x_n + \alpha_m)\right| 
\end{align*}

Since $\phi$ is piecewise continuous in basins of attraction,  there exist $M$ and $N$ such that 

\begin{gather}
\left|\phi(x_n) - \phi(x_n + \alpha_m) \right| \leq \dfrac{\epsilon}{2}\ \forall \ m \geq M \\
\left|\phi(x_n + \alpha_m)\right| \leq \dfrac{\epsilon}{2} \ \forall \ n \geq N
\end{gather}

\noindent since $\phi\left(x\right) = 0$ for $x \in U(x^{\star})$.  Therefore $\forall \ n \geq N$, we have $\left|\phi(x_n)\right|  \leq \epsilon$, which proves continuity at $x^\star$.

\end{proof}

In general, it is difficult to establish continuity of Koopman eigenfunctions across basins of attraction. This is evident in the restrictive assumptions (for e.g uniform continuity of eigenfunctions when restricted to basins) made in Theorem \ref{thm:contEq} in to establish continuity at a hyperbolic equilibrium point.

\subsection{CONTROL LIMITATIONS}

These continuity results have implications for controllability.  To draw out those implications, we refer back to some previous results in the literature.  Firstly, we consider the control formulation

\begin{gather}
\dot{\psi}_x = L_x \psi_x \left(x\right) + L_{xu} \psi_{xu} \left(x,u\right) \\
\psi_{xu}\left(x,0\right) = 0
\end{gather}

\noindent where $\psi_x\left(x\right)$ and $\psi_{xu} \left(x,u\right)$ are vectors of (invariant) Koopman observables.  There are other formulations present in the literature, but not all of those formulations are consistent \cite{bakker19kr} (i.e., they do not obey the chain rule).  Secondly, if an uncontrolled dynamical system $\dot{x} = f\left(x,0\right)$ has multiple isolated critical points, then its Koopman representation is rank deficient ($L_x$ is rank deficient) or the observables are identically zero at each critical point ($\psi_x\left(x^{\star}\right) = 0 \ \forall \ x^{\star}$) \cite{bakkerCDC2019}.  If the observables span the state, then $L_x$ is rank deficient, meaning that there is at least one eigenfunction $\phi\left(x\right)$ such that $\lambda = 0$.

\begin{theorem}
\label{control thm}
If $\phi_{x,0}\left(x_A\right) \neq \phi_{x,0} \left(x_B\right)$, $\lambda = 0$, for two fixed points $A$ and $B$, if $\phi_{x,0}\left(x\right)$ is continuous at all fixed points, and if $\psi_{xu} \left(x,u\right)$ is bounded, then it is not possible to reach the basin of attraction of $x_A$ from the basin of attraction of $x_B$.
\end{theorem}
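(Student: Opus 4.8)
The plan is to argue by contradiction, combining the basin-constancy of a zero-eigenvalue eigenfunction with the boundedness of its rate of change along controlled trajectories.

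First I would pin down the spatial structure of $\phi_{x,0}$. Because $\lambda = 0$ and $\phi_{x,0}$ is assumed continuous at every fixed point, Lemma \ref{basin invariance} applies on each basin of attraction: $\phi_{x,0}$ is constant on the basin of $x_A$, equal to $\phi_{x,0}(x_A) \equiv c_A$, and constant on the basin of $x_B$, equal to $\phi_{x,0}(x_B) \equiv c_B$, with $c_A \neq c_B$ by hypothesis. Theorem \ref{thm:constantCont} then tells me that $\phi_{x,0}$ cannot be globally continuous; in particular it carries a jump across the boundary separating these two basins, since it takes the two distinct constant values $c_A$ and $c_B$ on either side.

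Next I would track $\phi_{x,0}$ along a controlled trajectory. Writing $\phi_{x,0} = w^\top \psi_x$ for the left eigenvector $w$ of $L_x$ associated with the zero eigenvalue (so that $w^\top L_x = 0$), and differentiating along the controlled dynamics $\dot{\psi}_x = L_x \psi_x + L_{xu}\psi_{xu}$, the $L_x$ term is annihilated and I obtain $\frac{d}{dt}\phi_{x,0}(x(t)) = w^\top L_{xu}\psi_{xu}(x,u)$. Since $\psi_{xu}$ is bounded and $w^\top L_{xu}$ is a fixed row vector, this derivative is uniformly bounded, so $t \mapsto \phi_{x,0}(x(t))$ is Lipschitz, hence continuous, in time. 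The contradiction then comes from comparing these two facts: if a controlled trajectory starting in the basin of $x_B$ reached the basin of $x_A$, then $\phi_{x,0}(x(t))$ would have to pass from $c_B$ to $c_A$, and since $\phi_{x,0}$ takes only these two constant values on the interiors of the respective basins while jumping across the intervening boundary, the composite $t \mapsto \phi_{x,0}(x(t))$ would be forced to jump, contradicting the Lipschitz continuity just established.

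The step I expect to be the main obstacle is making the argument rigorous at the basin boundary, which is exactly the set where $\phi_{x,0}$ is discontinuous and where neither the constancy of $\phi_{x,0}$ nor the differentiation identity is guaranteed to behave classically; a priori the controlled trajectory could linger on it or pass through other basins. I would handle this by arguing that the boundary set carries no interval of intermediate values that the continuous function $\phi_{x,0}(x(t))$ could occupy over positive time, so that $\phi_{x,0}(x(t))$ takes values in a discrete set off a negligible time set and continuity forces it constant, contradicting $c_A \neq c_B$. Care is also needed because the differentiation identity presumes the observables are differentiable along the trajectory, which is precisely what fails at a crossing; I would therefore phrase the argument so that the bounded-derivative (hence no-jump) conclusion and the unavoidable jump at any boundary crossing are placed in direct contradiction.
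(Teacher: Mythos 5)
Your proposal follows essentially the same route as the paper's proof: constancy of $\phi_{x,0}$ on basins via Lemma \ref{basin invariance}, then the observation that the zero-eigenvalue component of the controlled dynamics satisfies $\dot{\phi}_{x,0} = w^\top L_{xu}\psi_{xu}\left(x,u\right)$, which is bounded (your left eigenvector $w$ is precisely the zero-eigenvalue row of the paper's $Q^{-1}$ in its eigendecomposition $L_x = QDQ^{-1}$), so the jump forced by crossing from one basin to the other cannot occur. The boundary subtlety you flag at the end -- that the trajectory might linger on the basin boundary, where $\phi_{x,0}$ is not constrained to be piecewise constant -- is a genuine concern, but the paper's own proof argues at the same level of rigor and does not address it either.
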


\begin{proof}
Let $M_A$ and $M_B$ be the sets corresponding to the basins of attraction of $x_A$ and $x_B$, respectively.  By Lemma \ref{basin invariance}, $\phi\left(x\right)$ is constant on all basins of attraction and thus piecewise constant over the whole domain of interest.  Since $\phi_{x,0}\left(x_A\right) \neq \phi_{x,0}\left(x_B\right)$ and $\phi_{x,0}\left(x\right)$ is piecewise constant, $\phi_{x,0}\left(x\left(t\right)\right)$ will be discontinuous for any path $x\left(t\right)$ that passes through both $M_A$ and $M_B$.

Consider the eigendecomposition of the Koopman control formulation

\begin{gather}
L_x = Q D Q^{-1} \\
\phi_x\left(x\right) = Q^{-1} \psi_x\left(x\right) \\
\dot{\phi}_x = D \phi_x\left(x\right) + Q^{-1} L_{xu} \psi_{xu} \left(x,u\right)
\end{gather}

\noindent where $\phi_x\left(x\right)$ is the vector of eigenfunctions of the original uncontrolled system, and $D$ is the diagonal matrix with the corresponding eigenvalues as its entries.  Note that for the null eigenfunction $\phi\left(x\right)$, $\lambda = 0$, and thus $\dot{\phi}_{x,0}$ only depends on $Q^{-1}$, $L_{xu}$, and $\psi_{xu} \left(x,u\right)$.   To generate an instantaneous jump in the null eigenfunction, it is necessary to produce an infinite $\dot{\phi}_{x,0}$.  This is not possible, however, because $\psi_{xu}\left(x,u\right)$ is bounded.  Therefore, the control observable $\psi_{xu}\left(x,u\right)$ cannot generate a trajectory from $M_B$ to $M_A$.

\end{proof}

\begin{corollary}
If $\phi_{x,0}\left(x_A\right) \neq \phi_{x,0} \left(x_B\right)$, $\lambda = 0$, for two fixed points $A$ and $B$, if $\phi_{x,0}\left(x\right)$ is continuous at all fixed points, and if $\psi_x \left(x\right)$ is bounded, then it is not possible to reach the basin of attraction of $x_A$ from the basin of attraction of $x_B$ by using a feedback control of the form

\begin{gather}
\psi_{xu} \left(x,u\right) = -F \psi_x \left(x\right)
\end{gather}

\end{corollary}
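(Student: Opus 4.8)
The plan is to recognize this corollary as a direct specialization of Theorem \ref{control thm}, in which the only genuinely new hypothesis is the structure of the control observable. Theorem \ref{control thm} already establishes unreachability under the assumptions that $\phi_{x,0}(x_A) \neq \phi_{x,0}(x_B)$, that $\lambda = 0$, that $\phi_{x,0}(x)$ is continuous at all fixed points, and crucially that $\psi_{xu}(x,u)$ itself is bounded. The corollary replaces the last assumption with boundedness of $\psi_x(x)$ together with the feedback law $\psi_{xu}(x,u) = -F\psi_x(x)$. Since every other hypothesis is carried over verbatim, the entire task reduces to verifying that, under the feedback structure, boundedness of $\psi_x$ forces boundedness of $\psi_{xu}$, after which Theorem \ref{control thm} applies directly.

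First I would invoke the boundedness of $\psi_x(x)$ to fix a constant $C$ with $\norm{\psi_x(x)} \leq C$ for all $x$ in the domain of interest. Then, treating $F$ as a constant feedback gain matrix with finite operator norm $\norm{F}$, I would substitute the feedback law to obtain the elementary submultiplicative bound
\begin{gather}
\norm{\psi_{xu}(x,u)} = \norm{F\psi_x(x)} \leq \norm{F}\,\norm{\psi_x(x)} \leq \norm{F}\,C,
\end{gather}
so that $\psi_{xu}(x,u)$ is uniformly bounded. This is the single substantive step, and it is where the linear feedback structure is essential: it is exactly what lets bounded control observables $\psi_x$ propagate to a bounded effective control input $\psi_{xu}$.

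Having established boundedness of $\psi_{xu}$, I would then simply note that all four hypotheses of Theorem \ref{control thm} now hold, and conclude that the basin of attraction of $x_A$ cannot be reached from that of $x_B$ under this feedback. The underlying mechanism, inherited from the theorem, is that a jump in the null eigenfunction $\phi_{x,0}$ across basins would require an infinite $\dot{\phi}_{x,0} = Q^{-1} L_{xu}\psi_{xu}(x,u)$, which a bounded $\psi_{xu}$ cannot produce.

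I do not expect a serious obstacle here, since the argument is a reduction rather than a fresh construction; the only point requiring care is the implicit assumption that $F$ is a genuine constant gain with finite $\norm{F}$. It is worth flagging, as the conceptual content of the corollary, that the conclusion would fail if $F$ were permitted to be state-dependent or unbounded, because then $\psi_{xu}$ could become unbounded even with bounded $\psi_x$, reopening the possibility of the infinite $\dot{\phi}_{x,0}$ needed to cross between basins.
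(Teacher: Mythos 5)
Your proposal is correct and matches the paper's intent exactly: the paper states this corollary without proof as an immediate consequence of Theorem \ref{control thm}, and the intended argument is precisely your reduction, namely that $\norm{\psi_{xu}(x,u)} = \norm{F \psi_x(x)} \leq \norm{F} \norm{\psi_x(x)}$ is bounded whenever $\psi_x$ is bounded and $F$ is a finite constant gain, at which point the theorem applies verbatim. Your closing remark about state-dependent or unbounded $F$ is a sensible caveat but not something the paper addresses.
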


Theorem \ref{control thm} and its corollary consider a specific Koopman control formulation, but the same proofs apply, \textit{mutatis mutandis}, to the other formulations considered in Bakker et al. \cite{bakker19kr}.  The same proofs also apply if eigenfunctions with non-zero eigenvalues are discontinuous across boundaries of basins of attraction, though it is more difficult to specify when this will and will not be the case; see the comments following Theorem \ref{closed traj thm} on this.  This does not mean that there are no control trajectories for the original system $\dot{x} = f\left(x,u\right)$ that can traverse the boundaries of basins of attraction.  Rather, it simply means that the Koopman observables corresponding to those trajectories may be unbounded.

\section{DISCUSSION}
\addtolength{\textheight}{-2.5cm} 

\label{sec:discussion}
The proofs and corollaries listed above have implications both for learning KO representations in general and for using the KO for calculating optimal control policies.  In producing the control results, we assume a consistent formulation, but if the formulation is not consistent, then there may be more serious problems with the Koopman representation (see \cite{bakker19kr}).  Firstly, the results show that, from an analysis perspective, it is advantageous to restrict the Koopman observables to functions that are `well-behaved'.  Functions with asymptotes, for example, are potentially problematic: they may go to infinity within the domain of interest, which violates the boundedness conditions used in many of the proofs, but asymptotic decay to 0 as $x \rightarrow \infty$ can also make it harder to guarantee stability (e.g., see Corollary \ref{cor:cor11}).

Secondly, using continuous observables (as in EDMD) is sufficient for use within basins of attraction (and justified by Hartman-Grobman type theorems \cite{mezic2017koopman}), but when there are multiple isolated critical points (with multiple basins of attraction), Koopman invariant functions are likely to have discontinuities at the boundaries.  Theorem \ref{closed traj thm} suggests that this could happen if the basin in question is bounded by periodic orbits.  Similarly, if the eigenfunctions are supported on the basin of attraction, Corollary \ref{zero phi cor} gives conditions under which those eigenfunctions would not be supported on the boundary, and that could also produce discontinuities at the boundary of the basin of attraction.

For multi-modal systems, it will likely be difficult to represent Koopman eigenfunctions (and/or Koopman observables) with a pre-set dictionary of basis functions.  For small, well-understood systems, the relevant basins of attraction may already be known and well-defined, but for a black box system, possibly with many dimensions, such information is not likely to be available \textit{a priori}.  In such situations, it would be easy to sample trajectories from different basins of attraction without realizing it.  The learning process could fail badly in such situations.  Neural networks are better able to approximate discontinuous functions, and thus they may be better suited for black box KO learning.

Thirdly, using control with the Koopman operator will, in general, only produce local optima -- namely, the local optimum that corresponds to the basin of attraction in which the trajectory begins.  In a way, this makes sense: it is not possible to get a globally optimal solution by using what is essentially a local optimization technique.  Attempting to learn control strategies that make it possible to move between basins of attraction will be challenging in and of itself.  Representing singularities (like the Dirac-delta function) is much harder than representing piecewise discontinuities -- even with a neural network.  Using discrete-time sampling, as is commonly done, also may not accurately represent the error in that region of the state space (see the example in \cite{bakkerCDC2019}).

Overall, these results show the limitations not of the KO itself but of our ability to calculate it and its observables from data in general nonlinear systems.  In other words, numerical KO representations may suffer irreducible model bias not just because of an inconsistent representation \cite{bakker19kr}, which could be remedied, or because of a finite number of observables, which could in principle be expanded, but because we lack the ability to represent the necessary observables.  This leads to two recommendations for future work.  Firstly, it may be worthwhile to study the use of observables that are not `well-behaved' (e.g., Dirac-delta functions, unit step functions, etc.).  Secondly, future work should build upon the stability results described above to account for the effects of error in the representations.  In particular, it would be worth studying how the error associated with the calculated KO representation affects our ability to guarantee stability (of the original system) from the KO.

\section{Conclusions}

In this paper, we provided a brief survey of some key analytical results wherein information contained in the Koopman operator of a dynamical system is used to infer properties of the original (nonlinear) dynamical system.  Many of these results relied on eigenfunction continuity assumptions.  Here, we provided a set of analytical results regarding stability that did not rely on these assumptions.  Following this, we defined some conditions under which eigenfunctions would be (dis)continuous or identically zero; those sets on which the eigenfunctions are not supported may constitute the boundaries of basins of attraction and may thus create discontinuities at those boundaries.  Those discontinuities then affect controllability and, in general, will limit Koopman control trajectories to the basin of attraction (of the uncontrolled dynamical system) in which they begin.  All of this, in turn, has implications for the nature and learnability of Koopman observables corresponding to nonlinear control systems with multiple basins of attraction.

\section{ACKNOWLEDGMENTS}

The research described in this paper was funded by the Deep Learning for Scientific Discovery Laboratory Directed Research and Development Investment at the Pacific Northwest National Laboratory, a multiprogram national laboratory operated by Battelle for the U.S. Department of Energy.


\bibliography{bib}
\bibliographystyle{ieeetr}

\end{document}